\theoremstyle{plain}
\newtheorem{theorem}{Theorem}[section]
\newtheorem{lemma}{Lemma}[section]
\newtheorem{proposition}{Proposition}[section]
\newtheorem{definition}{Definition}[section]
\theoremstyle{remark}
\newtheorem{remark}{Remark}[section]
\begin{document}

\date{\today}

\title{Central Extension of Mapping Class Group via Chekhov-Fock Quantization}
  \author{Binbin XU \\
\small \em Institut Fourier\\
\small \em 100 Rue Des Maths \\
\small \em 38402 Saint-Martin-d'H\`eres cedex, France\\
\small \em e-mail: {\tt binbin.xu@ujf-grenoble.fr}\\
}

\maketitle

\begin{abstract}
The central extension of mapping class groups of punctured surfaces of finite type that arises in Chekhov-Fock quantization is 12 times of the Meyer class plus the Euler classes of the punctures, which agree with the one arising in the Kashaev quantization.
\end{abstract}

\section{Introduction}
The quantum theory of Teichm\"uller spaces of punctured surfaces was developed in \cite{Fock} and \cite{Kashaev} independently, and then generalized to the higher rank Lie groups and cluster algebras in \cite{FGTeich} and \cite{FGcluster}. The main ingredient of both the constructions is Faddeev's quantum dilogarithm introduced in \cite{Faddeev}. This theory leads to one parameter families of projective unitary representations of Ptolemy groupoids associated to ideal triangulations of punctured surfaces. 

Let $V$ be a vector space and $G$ be a group. A \textit{projective representation} of $G$ on $V$ is a homomorphism from $G$ to $PGL(V)$. It is well-known that a projective representation of a group gives rise to a linear representation of some central extension of the given group. More precisely, let $h$ be a projective representation of $G$ on $V$. Let $\widetilde G$ be a central extension of $G$ by $\mathbb{C}^*$ which is the pullback of $GL(V)\rightarrow PGL(V)$ by $h$. Then one can associate a representation $\widetilde{h}$ of $\widetilde{G}$ on $V$, such that the following commutative diagram holds :

\centerline{
\xymatrix{
1\ar[r] & \mathbb{C}^*\ar[r] & GL(V) \ar[r] & PGL(V) \ar[r] & 1\\
1\ar[r] & \mathbb{C}^* \ar[r]\ar[u]  & \widetilde{G} \ar[r]\ar[u]_{\widetilde{h}} & G \ar[r]\ar[u]_h & 1
}}

A \textit{reduction} $\widetilde G_1$ of $\widetilde G$ is a central extension of $G$ by a subgroup $A_1$ of $\mathbb{C}^*$, such that $\widetilde G_1$ is a subgroup of $\widetilde G$ and the associated representation $\widetilde h_1:\widetilde G_1\rightarrow GL(V)$ is the restriction of $\widetilde h$. We say that $\widetilde G_1$ is the \textit{minimal reduction} of $\widetilde G$ if $\widetilde G_1$ is minimal among all the reductions of $\widetilde G$ up to isomorphism.

Suppose that $G$ has the presentation $F/R$ where $F$ is a free group and $R$ is the normal subgroup of $F$ generated by a set of relations. A projective representation of $G$ on $V$ is induced by a linear representation $\bar h$ of $F$ on $V$ such that $R$ is sent to the center of $GL(V)$. The homomorphism $\bar{h}$ will be called an \textit{almost linear representation} of $G$ on $V$, in order to distinguish the projective representation.

Let $\Sigma^{s}_{g}$ be an oriented compact surface of genus $g$ with $s$ punctures where $s>0$. Let $\Gamma(\Sigma^{s}_{g})$ be its mapping class group. An element of $\Gamma(\Sigma^{s}_{g})$ is a mapping class of homeomorphisms from $\Sigma^{s}_{g}$ to itself fixing the punctures setwisely. Let $\mathcal{H}$ denote the Hilbert space $L^2(\mathbb{R}^{2n})$ where $n$ is the number of arcs in an ideal triangulation of $\Sigma^{s}_{g}$. The Chekhov-Fock quantization constructs a family of projective representations $\rho_z:\Gamma(\Sigma^{s}_{g})\rightarrow PGL(\mathcal{H})$ depending on one parameter $z$ with $|z|=1$. Denote by $\widetilde{\mathbf{\Gamma}(\Sigma^{s}_{g})}$ the central extension $\rho_z^*\eta$ where $\eta$ is the central extension $GL(\mathcal{H})\rightarrow PGL(\mathcal{H})$.  

Central extensions of a group $G$ by an abelian group $A$ are classified, up to isomorphism, by elements of the 2-cohomology group $H^2(G,A)$. For the mapping class group $\Gamma(\Sigma^{s}_{g})$ with $g\ge4$ and $s\ge2$, by the work of Harer in \cite{Harer1} and Korkmaz and Stipsicz in \cite{Kork}, we have:
\begin{equation*}
H^2(\Gamma(\Sigma^{s}_{g}),\mathbb{Z})=\mathbb{Z}^{1+s},
\end{equation*} 
where the generators are given by one fourth of the Meyer signature class $\chi$ and $s$ Euler classes $e_i$ that are associated to $s$ punctures $p_i$ respectively. For $g=3$, the group $H^2(\Gamma(\Sigma^{s}_{g}),\mathbb{Z})$ still contains $\mathbb{Z}^{s+1}$, but it is not known if there are any other classes. For $g=2$, it has been proved that $H^2(\Gamma(\Sigma^{s}_{g}),\mathbb{Z})$ contains the subgroup $\mathbb{Z}/10\mathbb{Z}\oplus\mathbb{Z}^{S}$ whose torsion part is generated by $\chi$ and whose free part is generated by the Euler classes.

Our main result is the following theorem:
\begin{theorem}\label{main}
 Let $g\ge2$ and $s\ge4$. The minimal reduction $\widetilde{\Gamma(\Sigma^{s}_{g})}$ of $\widetilde{\mathbf{\Gamma}(\Sigma^{s}_{g})}$ is a central extension of $\Gamma(\Sigma^{s}_{g})$ by the cyclic subgroup $A\subset\mathbb{C}^*$ generated by $z^{-12}$, whose cohomology class is
\begin{equation*}
 c_{\widetilde{\Gamma(\Sigma^{s}_{g})}}=12\chi+\sum_{i=1}^s e_i\in H^2(\Gamma(\Sigma^{s}_{g}),A).
\end{equation*}
\end{theorem}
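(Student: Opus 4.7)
The strategy is to compute the cohomology class of the central extension $\widetilde{\mathbf{\Gamma}(\Sigma^{s}_{g})}$ explicitly as a 2-cocycle on $\Gamma(\Sigma^{s}_{g})$, and then identify its coordinates in the basis $\{\chi, e_1, \ldots, e_s\}$ of $H^2(\Gamma(\Sigma^{s}_{g}), \mathbb{Z})$ furnished by the theorems of Harer and Korkmaz-Stipsicz recalled above. Since $H^2(\Gamma,\mathbb{Z})\otimes\mathbb{Q}$ is $(1+s)$-dimensional in the stated range, it suffices to evaluate the cocycle on $1+s$ suitably chosen homology cycles and match coefficients.

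First, I would realize $\rho_z$ through the Ptolemy groupoid of ideal triangulations. Each mapping class $\phi$ is represented, non-uniquely, by a sequence of flips carrying a reference triangulation $\Delta$ to $\phi(\Delta)$, and the Chekhov-Fock construction assigns to each flip a unitary operator built from Faddeev's quantum dilogarithm. Composing these produces an almost linear representation $\bar\rho$ of a free group surjecting onto $\Gamma(\Sigma^{s}_{g})$. The Ptolemy groupoid is presented by the single-edge involution, the commutativity of disjoint flips, and the Pentagon relation; the first two lift to strict operator identities, while the Pentagon lifts only up to a scalar that is a specific power of $z$. Tracking these Pentagon scalars through each word realizing a relation of $\Gamma(\Sigma^{s}_{g})$ gives an explicit 2-cocycle $c$ valued in the cyclic subgroup of $\mathbb{C}^{*}$ generated by $z^{-12}$, which simultaneously identifies the target group $A$.

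To fix the coefficients in the expansion $c = a\chi + \sum_i b_i e_i$, I would evaluate $c$ on cycles dual to the generators. For each puncture $p_i$, the Dehn twist about $p_i$ corresponds to a short closed loop of flips in the groupoid; the accumulated phase along this loop isolates the Euler class contribution and should force $b_i = 1$. For the Meyer class, I would use a relation detecting signature, such as a chain relation or a Matsumoto-type relation on a genus-$2$ subsurface (available since $g\ge 2$), and compare the accumulated Pentagon phases with Meyer's signature cocycle; this comparison should give $a=12$. Minimality of the reduction then follows by exhibiting a single 2-cycle on which $c$ realizes the generator $z^{-12}$ of $A$.

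The main obstacle is the Pentagon accumulation step: for each of the test relations, one must track many Pentagon phases through a long sequence of flips and verify combinatorially that the total is exactly $12$ times the signature contribution plus a unit winding contribution at each puncture. This is where the factor $12$, rather than some other multiple of $4$, has to emerge naturally from the Pentagon identity of Faddeev's quantum dilogarithm, and it is the computational heart of the argument. A subsidiary difficulty is presentation-theoretic: choosing words of flips adapted to each homology generator so that stray phases can be isolated unambiguously.
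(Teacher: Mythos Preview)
Your overall strategy aligns with the paper's: both lift the Ptolemy-groupoid representation to an almost linear representation, track pentagon phases, and read off the cohomology class from the central defects attached to relations in $\Gamma$. The paper carries this out via the Gervais-style presentation (Dehn twists along non-separating curves; braid, lantern, chain, and puncture relations), lifting each relation through long explicit flip computations. It finds defects $z^{-12}$ for the lantern and puncture relations and $z^{-24}$ for the chain relation; after rescaling all lifted twists so that the lantern relation lifts trivially, only the chain and puncture relations retain nontrivial central factors ($w^{12}$ and $w$ respectively, with $w=z^{-12}$), and the identification of the resulting extension class with $12\chi+\sum_i e_i$ is then imported from the cohomological analysis in the Funar reference.

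There is one genuine gap in your outline. Your proposed detection of $e_i$---that ``the Dehn twist about $p_i$ corresponds to a short closed loop of flips'' whose accumulated phase isolates the Euler contribution---does not work as stated. A single Dehn twist is a group element, not a relation, so its lift is an operator rather than a scalar; and the twist about a small loop encircling a puncture has infinite order in $\Gamma(\Sigma^s_g)$, so it never closes up to the identity and produces no phase to read off. What actually detects each $e_i$ is a \emph{relation} dual to that class. In the paper this is the \emph{puncture relation}, a degenerate lantern on a three-holed sphere carrying one of the punctures, and its defect is computed by the same flip calculus as the full lantern, yielding $z^{-12}$. Your choice of the chain relation for the Meyer-class coefficient is exactly what the paper uses (obtaining $z^{-24}$), and the ``computational heart'' you anticipate is precisely the content of Section~3: several pages of pentagon and commutation manipulations on explicit flip words.
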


The organization of this paper is the following: in section 2, we will briefly recall some basic definitions and the Chekhov-Fock quantization, and in section 3 we will give the proof of our main theorem.

\subsection*{Acknowledgements} The author is greatly indebted to Louis Funar for suggesting this problem and for helpful comments. We would like to thank Vlad Sergiescu for useful discussions. The author acknowledges the support of the French Agence Nationale de la Recherche (ANR) ANR-11-BS01-0020 .

\section{Preliminaries}
\subsection{Ptolemy groupoids}
Let $\Sigma=\Sigma^s_g$ and $\Gamma=\Gamma(\Sigma^s_g)$.
\begin{definition}
 An \textbf{arc} is the homotopy class of a simple curve on $\Sigma$ connecting punctures which is non homotopic to a point or a puncture of $\Sigma$.
\end{definition}

\begin{definition}
 An \textbf{ideal triangulation} of $\Sigma$ is a maximal collection of distinct arcs which have pairwise disjoint representatives. A \textbf{labeled ideal triangulation} is obtained from an ideal triangulation by adding labels to its arcs. 
\end{definition}
We denote by $|\textbf{T}(\Sigma)|$ the set of ideal triangulations of $\Sigma$ and by $\textbf{T}(\Sigma)$ the set of labeled ideal triangulations of $\Sigma$.

Let $T$ be a labeled ideal triangulation. Let $\alpha$ be an arc of $T$ which is the common boundary of two distinct ideal triangles whose union is an embedded quadrilateral in $\Sigma$.

\begin{definition}
A \textbf{flip} on $\alpha$ in $T$ is to substitute $\alpha$ by the other diagonal $\alpha'$ of this ideal quadrilateral to get a new ideal triangulation $T'$ of $\Sigma$.
\end{definition}

We will use $F_\alpha(T)$ to denote the flip on the arc $\alpha$ in $T$.

A \textbf{groupoid} is a category such that all morphisms are invertible and for each pair of objects there exists at least one morphism between them. The set of automorphisms of an object forms a group. Reciprocally if a group $G$ acts freely on a set $X$, we can define an associated groupoid whose objects are the $G$-orbits in $X$ and whose morphisms are the orbits of the diagonal $G$-action on $X\times X$.

It is easy to see that the action of $\Gamma$ on $\textbf{T}(\Sigma)$ is free. By the above argument, we obtain a groupoid, called the \textbf{Ptolemy groupoid}, defined as follows:
\begin{enumerate}
 \item[(1)] the objects are the $\Gamma$-orbits in $\mathbf{T}(\Sigma)$;
 \item[(2)] the morphisms are the $\Gamma$-orbits in $\mathbf{T}(\Sigma)\times\mathbf{T}(\Sigma)$.
\end{enumerate}
\begin{remark}
When $s>1$, the $\Gamma$-action on $|\textbf{T}(\Sigma)|$ is not free.
\end{remark}

The Ptolemy groupoid can also be defined by using actions of flips and the symmetry group on $\mathbf{T}(\Sigma)$ where the symmetry group acts as permutations of labels. Then the Ptolemy groupoid has the following presentation due to Harer in \cite{Harer} and Penner in \cite{Penner1} and \cite{Penner2}:
\begin{theorem}\label{presentation}
If $\Sigma$ is different from the three-punctured sphere and the one-punctured torus, then any pair of labeled ideal triangulations can be connected by a chain of flips.

The Ptolemy groupoid is generated by the action of flips and the symmetry group. The relations between them are the following:
\begin{itemize}
\item[(1)] For any arc $\alpha$ in $T$, we have that $F_\alpha^2=1$;
\item[(2)] If $\alpha$ and $\beta$ are two arcs in $T$ having no common end point, we have that $F_\alpha F_\beta=F_\beta F_\alpha$;
\item[(3)] For any two arcs $\alpha$ and $\beta$ contained in an ideal pentagon, the pentagon relation holds: $$F_\alpha F_\beta F_\alpha F_\beta F_\alpha=\sigma(\alpha,\beta),$$ where $\sigma$ is the permutation of the labels of $\alpha$ and $\beta$;
\item[(4)] Let $\sigma\in S_n$ and let $\alpha$ be a labeled arc, then we have that $ F_\alpha \sigma=\sigma F_{\sigma(\alpha)}$.
\end{itemize}
\end{theorem}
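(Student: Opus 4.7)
The plan is to convert the almost linear representation of $\Gamma(\Sigma^s_g)$ arising from the Chekhov-Fock quantization into an explicit 2-cocycle with values in a cyclic subgroup of $\mathbb{C}^*$, and then identify its cohomology class inside the $(1+s)$-dimensional lattice of $H^2(\Gamma(\Sigma^s_g),\mathbb{Z})$ described by Harer and Korkmaz-Stipsicz. First I would fix a labeled ideal triangulation $T_0$ and, for each $\varphi\in\Gamma$, choose via Theorem \ref{presentation} a word $w_\varphi$ in flips and label permutations realizing the Ptolemy morphism $T_0\to\varphi(T_0)$. The Chekhov-Fock quantization assigns to each letter an operator on $\mathcal{H}$, so the word defines a set-theoretic lift $\widetilde\rho_z(\varphi)\in GL(\mathcal{H})$ of $\rho_z(\varphi)$, and the defect $c(\varphi,\psi)=\widetilde\rho_z(\varphi)\widetilde\rho_z(\psi)\widetilde\rho_z(\varphi\psi)^{-1}$ defines a 2-cocycle whose class in $H^2(\Gamma,\mathbb{C}^*)$ represents $\widetilde{\mathbf{\Gamma}(\Sigma^s_g)}$.

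Next I would compute the subgroup $A\subset\mathbb{C}^*$ actually generated by the values of $c$. By Theorem \ref{presentation}, $c$ is controlled by the scalar defects of the four generating Ptolemy relations under the Chekhov-Fock quantization: the involution $F_\alpha^2=1$, the commutation $F_\alpha F_\beta=F_\beta F_\alpha$ for disjoint arcs, the pentagon relation, and the symmetry compatibility (which is exact in the quantization). These defects are explicit monomials in $z$ arising from the Faddeev quantum dilogarithm; a direct bookkeeping shows they all lie in, and together generate, the cyclic group $A=\langle z^{-12}\rangle$, which gives the minimal reduction $\widetilde{\Gamma(\Sigma^s_g)}$. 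To identify the class $[c]\in H^2(\Gamma,A)$, I would use that under the hypotheses $g\ge 2$, $s\ge 4$ the free part of $H^2(\Gamma(\Sigma^s_g),\mathbb{Z})$ is generated by $\chi/4$ and the Euler classes $e_i$. The coefficient of each $e_i$ I would detect by pulling $c$ back along an infinite cyclic subgroup generated by a Dehn twist around $p_i$, and the coefficient of $\chi$ by pulling back along a subsurface-inclusion homomorphism from a closed genus-$g$ mapping class group where Meyer's cocycle has an explicit description.

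The main obstacle will be pinning down the coefficient $12$ of $\chi$. This requires faithfully tracking products of Faddeev dilogarithm phases over long sequences of flips realizing mapping classes of known Meyer signature, and separating this contribution from those of the Euler classes inside $H^2(\Gamma,A)$. The cleanest route is likely a comparison with the Kashaev quantization, whose central extension was already identified as $12\chi+\sum_{i=1}^s e_i$; one then only needs to show that the Chekhov-Fock and Kashaev cocycles differ by a coboundary at the Ptolemy-groupoid level, which transfers the cohomological identification without redoing the signature calculation by hand.
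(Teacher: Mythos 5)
Your proposal does not address the statement you were asked to prove. The statement is Theorem \ref{presentation}: the Harer--Penner presentation of the Ptolemy groupoid, i.e.\ that (i) any two labeled ideal triangulations of $\Sigma$ (excluding the three-punctured sphere and one-punctured torus) are connected by a chain of flips, and (ii) the involution, commutation, pentagon, and symmetry-naturality relations generate all relations among flips. What you have sketched instead is a strategy for the paper's \emph{main} theorem --- identifying the cohomology class of the central extension arising from the Chekhov--Fock quantization as $12\chi+\sum_i e_i$. Worse, your argument explicitly \emph{invokes} Theorem \ref{presentation} as an input (``choose via Theorem \ref{presentation} a word $w_\varphi$ in flips and label permutations''), so as a proof of that theorem it is circular. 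Nothing in your text establishes flip-connectivity of the set of labeled ideal triangulations, and nothing establishes that the four listed relations suffice to generate the kernel of the map from the free product of flips and permutations onto the groupoid.

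For the record, the paper itself does not prove this theorem either; it quotes it from Harer and Penner. An actual proof would proceed along entirely different lines from anything in your sketch: one realizes the set of ideal triangulations as the top-dimensional cells of the arc complex (equivalently, of the cell decomposition of decorated Teichm\"uller space), derives flip-connectivity from the connectivity of the codimension-one skeleton, and derives completeness of the relations from simple-connectivity, with the commutation and pentagon relations corresponding to the two combinatorial types of codimension-two cells. If your goal was the main theorem rather than this one, your sketch is a reasonable outline of the standard cohomological strategy (and is close in spirit to what the paper attributes to \cite{Funar}), but it cannot stand in for a proof of the presentation theorem.
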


\subsection{Shearing coordinates}
In this section, we recall the shearing coordinates system of the Teichm\"uller space $\mathcal{T}(\Sigma)$ of $\Sigma$ and the Poisson structure on it. 

We choose and fix an orientation on $\Sigma$. Given a hyperbolic structure on $\Sigma$, each arc has a unique geodesic representative. In the following, the word ``arc`` will denote its geodesic representative. Let $\alpha$ be an arc in an ideal triangulation $T$ of $\Sigma$. Fix an orientation on $\alpha$. Let $\widetilde\alpha$ be one of its lifts in the hyperbolic plane. Then $\widetilde\alpha$ will be an oriented diagonal in an ideal quadrilateral $Q$ with one ideal triangle $\Delta_l$ on its left side and another ideal triangle $\Delta_r$ on its right side. In each triangle, the vertex not lying on $\widetilde\alpha$ can be orthogonally projected on $\widetilde\alpha$. We denote by $v_l$ and $v_r$ the images of the vertices of $\Delta_l$ and $\Delta_r$ respectively. 

\begin{definition}
We call the directed hyperbolic distance from $v_l$ to $v_r$ with respect to the orientation of $\widetilde\alpha$ the \textbf{shearing coordinate} $t(\alpha)$ on $\alpha$. 
\end{definition}

\begin{remark}
Notice that lifts of $\alpha$ are different from each other by orientation preserving isometries of the hyperbolic plane, thus $t(\alpha)$ does not depend on the choice of $\widetilde\alpha$. At the same time, reversing the orientation of $\alpha$ also exchanges the left side and the right side of $\widetilde\alpha$. This implies that $t(\alpha)$ is also independent of the choice of the orientation of $\alpha$. So the shearing coordinate $t(\alpha)$ is well defined.
\end{remark}

The explicit formula of $t(\alpha)$ can be given by using cross-ratios of vertices of the associated ideal quadrilateral. Denote by $\{p_1,p_2,p_3,p_4\}$ the four vertices of the associated ideal quadrilateral $Q$ with a counter-clockwise order. Assuming that $p_1$ and $p_3$ are the vertices of $\widetilde\alpha$ with the orientation such that $p_2$ is on the left and $p_4$ is on the right. Then the shearing coordinate on $\alpha$ is defined by:
\begin{displaymath}
t(\alpha)=\log\left(-[p_1,p_3;p_2,p_4]\right)=\log\left(-\frac{(p_1-p_2)(p_3-p_4)}{(p_1-p_4)(p_3-p_2)}\right).
\end{displaymath}

The shearing coordinates system depends on the choice of the ideal triangulation. By doing a flip we get another shearing coordinates system. Let $T'$ be the ideal triangulation coming from $T$ by flipping $\alpha$ to $\alpha'$. By comparing the formulas of cross-ratios before and after the flip, we obtain the change formula:
\begin{displaymath}
 t'(\beta)=\left\{\begin{array}{ll}
-t(\alpha) & \textrm{if $\beta=\alpha'$}\\
t(\beta)+\epsilon(\alpha,\beta)\phi(\rm{sign}(\epsilon(\alpha,\beta))t(\alpha)) & \textrm{if $\beta$ and $\alpha'$ are adjacent but $\beta\neq\alpha'$}\\
t(\beta) & \textrm{otherwise}
\end{array}\right.,
\end{displaymath}
where $\phi(z)=\log(1+\exp(z))$ and the function $\epsilon_T$ is defined in the following. Let $\Delta$ be an ideal triangle on $\Sigma$ which is a connected component of $\Sigma\setminus T$. Let $E(\Delta)$ be the set of its edges. The orientation of $\Sigma$ induces an orientation of $\Delta$ which induces a cyclic order of the edges of $\Delta$, thus we can define an anti-symmetric map 
\begin{equation*}
\epsilon_{T,\Delta}:E(\Delta)\times E(\Delta)\rightarrow \{0,\pm1\}
\end{equation*}
in the following way: 
\begin{displaymath}
 \epsilon_{T,\Delta}(\alpha,\beta)=\left\{\begin{array}{ll}
-1 & \textrm{if $\beta$ comes after $\alpha'$ counter-clockwisely}\\
1 & \textrm{if $\beta$ comes after $\alpha'$ clockwisely}\\
0 & \textrm{otherwise}
\end{array}\right.,
\end{displaymath}
where $(\alpha,\beta)$ is in $E(\Delta)\times E(\Delta)$. By taking the sum of $\epsilon_{T,\Delta}$ over all ideal triangles $\Delta$, we obtain the following anti-symmetric map $\epsilon_T$:
\begin{equation*}
 \epsilon_T: E(T)\times E(T)\rightarrow \{0,\pm1,\pm2\},
\end{equation*}
where $E(T)$ is the set of arcs in $T$.

This anti-symmetric map $\epsilon_T$ also induces the Poisson structure on the Teichm\"uller space by the following bi-vector field:
\begin{displaymath}
 P(T)=\sum_{\alpha,\beta}\epsilon_T(\alpha,\beta)\frac{\partial}{\partial t(\alpha)}\wedge\frac{\partial}{\partial t(\beta)}.
\end{displaymath}

\subsection{Quantum Teichm\"uller space}

The quantization of a Poisson manifold equivariant with respect to a discrete group $G$-action is a family of $\ast$-algebras $A^\hbar$ depending smoothly on a positive real parameter $\hbar$ satisfying the following properties:
\begin{enumerate}[(1)]
\item All $A^\hbar$ are isomorphic to each other as linear spaces ;
\item The group $G$ acts as the outer automorphisms on each $\ast$-algebra ; 
\item For $\hbar=0$, the algebra $A^0$ is isomorphic as a $G$-module to the $\ast$-algebra of the complex-valued function on the Poisson manifold ; 
\item The Poisson bracket $\{,\}$ on $A^0$ is the limit of $\{,\}_{\hbar}/(2\pi i\hbar)$ as $\hbar$ going to zero. It coincides with the one on the original Poisson manifold.
\end{enumerate}

For each $\hbar>0$, we associate one $\ast$-algebra $A^\hbar(T)$ to each ideal triangulation $T$ on $\Sigma$, generated by $\{Z_\hbar(\alpha):\alpha\in T\}$ with the $\ast$- structure:
\begin{displaymath}
 (Z_\hbar(\alpha))^\ast=Z_\hbar(\alpha).
\end{displaymath}

The Poisson bracket on $A^\hbar(T)$ is obtained by deforming the Poisson bracket for the shearing coordinates associated to $T$ by the following formula:
\begin{displaymath}
 \{Z_\hbar(\alpha),Z_\hbar(\beta)\}_\hbar=2\pi i\hbar\{t(\alpha),t(\beta)\}.
\end{displaymath}

The flip acts on the $\ast$-algebras $A^\hbar(T)$ by the formula
\begin{displaymath}
 Z_\hbar'(\beta)=\left\{\begin{array}{ll}
-Z_\hbar(\alpha) & \textrm{if $\beta=\alpha'$}\\
Z_\hbar(\beta)+\epsilon(\alpha,\beta)\phi^\hbar(\rm{sign}(\epsilon(\alpha,\beta))Z_\hbar(\alpha)) & \textrm{if $\beta$ and $\alpha'$ are adjacent but $\beta\neq\alpha'$}\\
Z_\hbar(\beta) & \textrm{otherwise}
\end{array}\right.,
\end{displaymath}
where 
\begin{displaymath}
 \phi^\hbar(z)=-\frac{\pi\hbar}{2}\int_\Omega \frac{\exp(-iuz)}{\sinh(\pi u)\sinh(\pi\hbar u)}\rm{d}u,
\end{displaymath}
where $\Omega$ is the path going along the real axis from $-\infty$ to $+\infty$ and passing the origin from above. The symmetric group acts as permutations of labels.

For each $\hbar$, the construction above gives us a projective functor $\mathcal{Q}_\hbar$ from the Ptolemy groupoid to the category of $\ast$-algebra.
\begin{definition}
The family of projective functors $\mathcal{Q}_\hbar$ are called the \textbf{quantization} of Teichm\"uller space.
\end{definition}
\subsection{Almost linear representation of Ptolemy groupoid}
A Heisenberg $\ast$-algebra $H_n$ is generated by $2n+1$ generators $P_1,\dots,P_n,Q_1,\dots,Q_n$ and $C$ satisfying the following relations:
\begin{itemize}
 \item[(1)] The generator $C$ is the central element ;
 \item[(2)] For any two index $j$ and $k$, we have the relations: $$\{P_j,P_k\}=\{Q_j,Q_k\}=0;$$
 \item[(3)] For any two index $j$ and $k$, we have the relations: $$\{P_j,Q_k\}=C\delta_{jk}.$$
\end{itemize}

It has a irreducible integrable representation in the Hilbert space $\mathcal{H}$ described as following: consider the canonical complex structure on $\mathbb{R}^{2n}$ and denote by $z_1,\dots,z_n$ the complex coordinates and by $x_1,\dots,x_n,y_1,\dots,y_n$ the real coordinates such that $z_j=x_j+iy_j$, then we represent the generators as following operators:
\begin{eqnarray*}
 &&\rho(P_j)(f)(z_1,\dots,z_n)=z_jf(z_1,\dots,z_n),\\
 &&\rho(Q_j)(f)(z_1,\dots,z_n)=-2\pi i\hbar\frac{\partial f}{\partial z_j}(z_1,\dots,z_n),\\
 &&\rho(C)(f)(z_1,\dots,z_n)=2\pi i\hbar f(z_1,\dots,z_n).
\end{eqnarray*}

The representation of the $\ast$-algebra $A^\hbar(T)$ in $\mathcal{H}$ is generated by the linear combinations of $\rho(P_j)$ and $\rho(Q_k)$ above.

The Stone von Neumann theorem holds true for $A^\hbar(T)$. In particular, let $T$ and $T'$ be two labeled ideal triangulations different from each other by a flip $F_\alpha$. Consider the representations $\rho(A^\hbar(T))$ and $\rho(A^\hbar(T'))$ of $A^\hbar(T)$ and $A^\hbar(T')$ respectively. The uniqueness of representations yields the existence of an intertwinner $K(F_\alpha)$ between these two representations. It acts in the following way:
\begin{displaymath}
 \rho(A^\hbar(T'))=K(F_\alpha)^{-1}\rho(A^\hbar(T))K(F_\alpha).
\end{displaymath}

This intertwinner functor $K$ induces an almost linear representation of the Ptolemy groupoid which induces an almost linear representation of $\Gamma$ into a unitary group. The following result are proved in \cite{FGcluster}:
\begin{proposition}
 The almost linear representation $K$ has the following property:
 \begin{itemize}
  \item[(1)] For any disjoint arcs $\alpha$ and $\beta$ in the labeled ideal triangulation $T$, the operators $K(F_\alpha)$ and $K(F_\beta)$ commute with each other ;
  \item[(2)] $K(F_\alpha)^2=1$ ;
  \item[(3)] The pentagon relation: $$K(F_\alpha)K(F_\beta)K(F_\alpha)K(F_\beta)K(F_\alpha)=e^{2\pi i\hbar} \sigma,$$where $\sigma$ is the permutation of the labels $\alpha$ and $\beta$.
 \end{itemize}
\end{proposition}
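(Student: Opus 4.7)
The plan is to use the Stone--von Neumann uniqueness theorem together with the explicit expression of $K(F_\alpha)$ in terms of Faddeev's quantum dilogarithm $\Phi^\hbar$, whose logarithmic derivative is essentially the function $\phi^\hbar$ appearing in the flip formula for $Z_\hbar$. For each of the three relations, the left-hand and right-hand operators both intertwine $\rho(A^\hbar(T))$ with the same target representation of $A^\hbar(T')$ obtained by applying the corresponding composition of flips; by irreducibility of the Heisenberg representation they must agree up to a scalar, so in each case it remains only to pin down that scalar.

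For (1), when $\alpha$ and $\beta$ share no endpoint the two flip formulas modify disjoint subsets of the generators $Z_\hbar(\gamma)$. Consequently $K(F_\alpha)$ can be written as a function of operators that all commute with those entering $K(F_\beta)$, so the two intertwiners commute exactly and no scalar ambiguity arises. For (2), two consecutive flips on $\alpha$ bring $T$ back to itself with the same labels, so $K(F_\alpha)^2$ is a scalar intertwiner of $\rho(A^\hbar(T))$ with itself; the value $1$ is then extracted from the inversion identity for $\Phi^\hbar$ combined with the explicit Weil-type Gaussian normalisation in $K(F_\alpha)$.

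The most delicate step is (3). Algebraically, five flips in the ideal pentagon realise the label permutation $\sigma$, so the five-fold product $K(F_\alpha)K(F_\beta)K(F_\alpha)K(F_\beta)K(F_\alpha)$ and $\sigma$ induce the same isomorphism of $\ast$-algebras and hence agree up to a scalar. Identifying this scalar with $e^{2\pi i\hbar}$ reduces to the quantum pentagon identity of Faddeev--Kashaev for $\Phi^\hbar$, applied to the pair of canonically conjugate Heisenberg operators representing $Z_\hbar(\alpha)$ and $Z_\hbar(\beta)$ inside the ideal pentagon; the factor $e^{2\pi i\hbar}$ emerges as the residual prefactor after matching both sides of that identity and absorbing the Weil normalisations of the five intertwiners.

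I expect the pentagon identity to be the main obstacle: parts (1) and (2) are essentially formal once the intertwiners are set up correctly and Stone--von Neumann is invoked, whereas (3) rests on a genuinely analytic identity about $\Phi^\hbar$ that is not implied by the $\ast$-algebra relations, and requires careful bookkeeping of signs, Gaussian prefactors, and the action of $\sigma$ on the Hilbert space to extract the correct central scalar.
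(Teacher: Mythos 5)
The paper does not actually prove this proposition: it is quoted verbatim from the reference \cite{FGcluster} (Fock--Goncharov), so there is no in-paper argument to compare yours against. Your outline does follow the standard route of that reference --- build $K(F_\alpha)$ from Faddeev's quantum dilogarithm composed with a linear canonical transformation, use irreducibility (Stone--von Neumann) to reduce each relation to the determination of a scalar, and extract the scalars from the inversion and pentagon identities for $\Phi^\hbar$ --- so the strategy is the right one.

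That said, as written it is a plan rather than a proof: every genuinely hard step (the explicit operator form of $K(F_\alpha)$, the inversion identity in part (2), and above all the operator pentagon identity and the bookkeeping that produces the precise central constant in part (3)) is asserted rather than carried out, and part (3) is exactly where the content lies, since the value of that constant is what drives the $z^{-12}$ computations in the rest of the paper. One concrete inaccuracy: in part (1) you claim that when $\alpha$ and $\beta$ share no endpoint the two flips ``modify disjoint subsets of the generators.'' That is false in general --- a third arc $\gamma$ can bound a triangle with $\alpha$ on one side and a triangle with $\beta$ on the other, so both flips act nontrivially on $Z_\hbar(\gamma)$. The commutation still holds, but because the two modifications of $Z_\hbar(\gamma)$ (adding functions of $Z_\hbar(\alpha)$ and of $Z_\hbar(\beta)$, which commute since $\epsilon_T(\alpha,\beta)=0$) commute with each other, not because they touch disjoint variables; the justification needs to be repaired accordingly.
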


\section{Proof of the theorem}

In \cite{Funar}, the authors gave the following presentation of $\Gamma$ which is a consequence of Gervais' result in \cite{Gervais}.
\begin{lemma}
For any oriented surface $S$ of genus $g\ge 2$ and $s\ge4$ punctures, the mapping class group has the following presentation:
\begin{enumerate}[(1)]
 \item The generators are the Dehn twists $D_a$ along all non separating simple close geodesics $a$ in $S$ ;
 \item The relation between them are the following:
        \begin{enumerate}[(a)]
          \item The type-0 braid relation: for each pair of disjoint non-separating simple closed geodesics $a$ and $b$, we have that $D_aD_b=D_bD_a$ ;
          \item The type-1 braid relation: for each pair of non-separating simple closed geodesics $a$ and $b$ with the geometric intersection number $i(a,b)=1$, we have that: $$D_aD_bD_a=D_bD_aD_b;$$
          \item The lantern relation: for each four-holes sphere embedded in the surface whose boundary $a_0,a_1,a_2,a_3$ are the non-separating simple closed geodesics, we have that: $$D_{a_0}D_{a_1}D_{a_2}D_{a_3}=D_{a_{12}}D_{a_{23}}D_{a_{13}};$$
          \item The chain relation: for each two-holed torus embedded in the surface, we have that: $$(D_aD_bD_c)^4=D_eD_f;$$
          \item The puncture relation: for each sphere with three holes and one puncture embedded in the surface, we have $$D_{a_1}D_{a_2}D_{a_3}=D_{a_{12}}D_{a_{13}}D_{a_{23}}.$$
         \end{enumerate}
\end{enumerate}
\end{lemma}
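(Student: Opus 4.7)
The plan is to derive this presentation following Funar--Sergiescu, by combining Gervais' presentation of the mapping class group of a closed surface with an inductive treatment of the punctures via the Birman exact sequence.

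First, I would invoke Gervais' theorem for the closed surface $\Sigma^0_g$, which gives a presentation whose generators are the Dehn twists along all non-separating simple closed curves and whose relations are exactly the type-0 braid, type-1 braid, lantern, and chain relations. This already provides four of the five relation families appearing in the lemma; no further work is needed in the closed case, so the burden of the proof is concentrated in controlling the transition from $s-1$ to $s$ punctures.

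Next, I would induct on the number of punctures $s$ using the Birman exact sequence
\begin{equation*}
1\to\pi_1(\Sigma^{s-1}_g,p)\to\Gamma(\Sigma^s_g)\to\Gamma(\Sigma^{s-1}_g)\to 1.
\end{equation*}
Given a presentation of $\Gamma(\Sigma^{s-1}_g)$ of the claimed form, one obtains a presentation of $\Gamma(\Sigma^s_g)$ by lifting every generator, adjoining a point-pushing generator for each generator of $\pi_1(\Sigma^{s-1}_g,p)$, and recording the conjugation and trivialization relations dictated by the short exact sequence. The key geometric input is that every point-pushing element can be written as $D_{\gamma_1}D_{\gamma_2}^{-1}$, where $\gamma_1$ and $\gamma_2$ are non-separating curves obtained by pushing a simple loop around the puncture to its two sides; this keeps the generating set inside the prescribed family of non-separating Dehn twists.

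I would then identify the newly introduced relations with instances of the puncture relation. Specifically, in a three-holed sphere with one puncture, the lantern relation $D_{a_0}D_{a_1}D_{a_2}D_{a_3}=D_{a_{12}}D_{a_{23}}D_{a_{13}}$ degenerates when $a_0$ bounds a once-punctured disk, because $D_{a_0}$ is then isotopic to the identity, yielding precisely $D_{a_1}D_{a_2}D_{a_3}=D_{a_{12}}D_{a_{13}}D_{a_{23}}$. The main obstacle will be completeness: verifying that every relation among non-separating Dehn twists in $\Gamma(\Sigma^s_g)$ is already a consequence of the five listed families. This requires the bookkeeping present in Gervais' argument, reducing each relation to one supported on a two-holed torus, a four-holed sphere, or a once-punctured three-holed sphere, and, on top of that, an argument showing that the point-pushing relations produced by the Birman induction are all consequences of the puncture relation together with the braid and lantern families.
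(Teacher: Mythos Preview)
The paper does not supply a proof of this lemma; it is quoted as a result from \cite{Funar}, itself obtained as a consequence of Gervais' presentation \cite{Gervais}. There is thus nothing in the paper to compare against beyond that citation.

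Your outline takes a different and more circuitous route than the one implicit in those references. Gervais' theorem already treats surfaces with nonempty boundary, not just closed surfaces: the generators are Dehn twists along all non-separating and all boundary-parallel simple closed curves, and the relations are the braid relations together with the star relations. One passes to the punctured surface by capping each boundary circle with a once-punctured disk, which sets the corresponding boundary twist to the identity; the puncture relation of the lemma is then nothing but a lantern relation in which one boundary curve has become trivial. No Birman induction from the closed surface is required, and completeness of the relation set is inherited directly from Gervais.

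Your inductive approach via the Birman sequence could in principle be made to work, but you correctly flag and then leave open the essential difficulty: one must check that every relation forced by the extension---the surface relation in $\pi_1(\Sigma^0_g)$ at the step $s=0\to 1$, and all the conjugation relations expressing how lifted Dehn twists act on point-pushes at each subsequent step---is a consequence of the five listed families. That bookkeeping is substantial and is exactly what the direct derivation from Gervais already absorbs; deferring it to ``the bookkeeping present in Gervais' argument'' is effectively circular, since at that point one may as well invoke Gervais for the bounded surface from the start. As written, your proposal is an outline with an acknowledged gap rather than a proof.
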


By using this presentation, we are able to prove the following proposition which is the main step of the proof of Theorem \ref{main}:
\begin{proposition}\label{pro}
 By using the Chekhov-Fock quantization, we obtain a central extension of $\Gamma$ with the following presentation:
\begin{enumerate}[(1)]
 \item Generators:
        \begin{enumerate}[(a)]
         \item One central element: $w=z^{-12}$, where $z$ is the constant coming from the Chekhov-Fock quantization;
         \item One element $\widetilde{D}_a$ associated to each the Dehn twists $D_a$ along all non separating simple close geodesics $a$ in $S$.
        \end{enumerate}
 \item Relations:
       \begin{enumerate}[(a)]
 \item The type-0 braid relation: $\widetilde{D}_a\widetilde{D}_b=\widetilde{D}_b\widetilde{D}_a;$
 \item The type-1 braid relation: $\widetilde{D}_a\widetilde{D}_b\widetilde{D}_a=\widetilde{D}_b\widetilde{D}_a\widetilde{D}_b;$
 \item The Lantern relation: $\widetilde{D}_{a_0}\widetilde{D}_{a_1}\widetilde{D}_{a_2}\widetilde{D}_{a_3}=\widetilde{D}_{a_{12}}\widetilde{D}_{a_{23}}\widetilde{D}_{a_{13}};$
 \item The chain relation: $(\widetilde{D}_a\widetilde{D}_b\widetilde{D}_c)^4=w^{12}\widetilde{D}_e\widetilde{D}_f;$
 \item The puncture relation: $\widetilde{D}_{a_1}\widetilde{D}_{a_2}\widetilde{D}_{a_3}=w\widetilde{D}_{a_{12}}\widetilde{D}_{a_{13}}\widetilde{D}_{a_{23}}.$ 
\end{enumerate}
\end{enumerate}
\end{proposition}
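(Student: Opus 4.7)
The plan is to lift each generator and each relation of $\Gamma$ from the presentation in the preceding lemma to the central extension, using the almost linear representation $K$ of the Ptolemy groupoid from the Chekhov--Fock quantization. For each Dehn twist $D_a$ along a non-separating simple closed geodesic $a$, I choose an ideal triangulation $T_a$ containing an arc parallel to $a$ and realize $D_a$ as an explicit finite composition of flips followed by a labeling permutation, sending $T_a$ to $D_a(T_a)$. Applying $K$ produces a unitary operator, well-defined up to a power of the central element $z=e^{2\pi i\hbar}$, and I fix a representative $\widetilde{D}_a$. By construction every defining relation in $\Gamma$ lifts to an equality among the $\widetilde{D}_a$'s up to a central scalar, and the bulk of the proof consists in computing that scalar for each of the five families of relations.

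The key tools are the three identities of the preceding proposition: commutativity of disjoint flips, the involution $K(F_\alpha)^2=1$, and the pentagon relation $K(F_\alpha)K(F_\beta)K(F_\alpha)K(F_\beta)K(F_\alpha)=e^{2\pi i\hbar}\sigma$. For the type-0 braid relation I choose adapted triangulations so that the flip sequences representing $D_a$ and $D_b$ are supported on disjoint parts of the surface; commutativity of disjoint flips then yields the lifted relation with no scalar correction. For the type-1 braid relation and for the lantern relation, I realize both sides as morphisms between the same pair of triangulations of a one-holed torus neighborhood and of a four-holed sphere respectively; rewriting the two words into a common normal form, the pentagon contributions from the two sides cancel pairwise and the resulting scalars are trivial.

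For the chain relation $(D_aD_bD_c)^4=D_eD_f$, I work on the two-holed torus neighborhood of $a\cup b\cup c$, decompose both sides into concrete flip sequences and count how many pentagon identities must be invoked to bring the left-hand word into the form of the right-hand word; this book-keeping produces the factor $w^{12}=z^{-144}$, which matches the expected contribution of the Meyer class to this relation. For the puncture relation $D_{a_1}D_{a_2}D_{a_3}=D_{a_{12}}D_{a_{13}}D_{a_{23}}$, an analogous analysis on a three-holed sphere with one puncture yields a single factor $w=z^{-12}$, reflecting the Euler class contribution of that puncture in the main theorem.

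The principal obstacle is the explicit verification for the chain and the puncture relations: one must fix coherent ideal triangulations of the relevant subsurfaces, write each Dehn twist appearing in the relation as a precise flip sequence, and then perform a careful accounting of pentagon applications needed to reduce a long word of flip operators into the target word. The other three relations reduce essentially to formal manipulations using only the commutativity and involution identities, with no surviving pentagon contributions. Once the five scalars $1,\,1,\,1,\,w^{12},\,w$ are obtained, the presentation of the central extension stated in the proposition follows directly from the presentation of $\Gamma$ given in the preceding lemma.
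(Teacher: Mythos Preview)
Your overall strategy is correct, but the predicted scalars from the raw flip computations are wrong, and you are missing a crucial normalization step that the paper uses to pass from the raw scalars to the presentation stated in the proposition.

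Concretely: you assert that for the lantern relation the pentagon contributions ``cancel pairwise and the resulting scalars are trivial.'' The paper's explicit computation on the four-holed sphere (its Lemma for the lantern) shows this is false: with the concrete flip expressions chosen there one obtains
\[
\widetilde{D}_{a_0}\widetilde{D}_{a_1}\widetilde{D}_{a_2}\widetilde{D}_{a_3}=z^{-12}\,\widetilde{D}_{a_{12}}\widetilde{D}_{a_{23}}\widetilde{D}_{a_{13}},
\]
not a trivial scalar. Likewise, for the chain relation the paper's computation on the two-holed torus yields
\[
(\widetilde{D}_a\widetilde{D}_b\widetilde{D}_c)^4=z^{-24}\,\widetilde{D}_e\widetilde{D}_f,
\]
not $z^{-144}$. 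The scalars $1,1,1,w^{12},w$ in the proposition are \emph{not} the outputs of the flip/pentagon bookkeeping; they arise only after one replaces every lift $\widetilde{D}$ by $z^{-12}\widetilde{D}$ so that the lantern (and braid) relations become trivial. This renormalization, which exploits the imbalance in the number of Dehn twists on the two sides of each relation (four versus three for the lantern, twelve versus two for the chain, three versus three for the puncture), is precisely what converts $(z^{-12},z^{-24},z^{-12})$ into $(1,w^{12},w)$ and is absent from your outline.

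A second, smaller point: your treatment of the type-1 braid relation via ``pentagon cancellation on a one-holed torus'' is not how the paper proceeds. The paper first argues abstractly that, given \emph{any} lift $\widetilde{D}_a$, there is a unique lift $\widetilde{D}_b$ making the type-1 relation trivial, and then propagates this choice to all intersecting pairs by conjugation via the mapping class group; later, inside the chain-relation computation, it explicitly verifies that the concrete flip-lifts used there do satisfy the trivial type-1 braid relation. You should also note that the paper's lantern and chain computations are not short heuristics but lengthy line-by-line reductions of flip words using commutations and pentagon moves, and the exact count of pentagon applications is what produces $z^{-12}$ and $z^{-24}$; there is no a priori reason for these to cancel.
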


We prove Proposition \ref{pro} by proving a sequence of lemmas. 
\begin{lemma}
 For the type-0 braid relation, we have that $\widetilde{D}_a\widetilde{D}_b=\widetilde{D}_b\widetilde{D}_a$.
\end{lemma}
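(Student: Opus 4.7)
The plan is to lift the mapping class group relation to the almost linear representation level by writing each Dehn twist as a word of flips and symmetries, and then invoking property (1) of the intertwiner $K$ (commutativity of $K(F_\alpha)$ and $K(F_\beta)$ for disjoint arcs) to get an exact (non-projective) commutation.

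First I would fix a labeled ideal triangulation $T$ of $\Sigma$ that is adapted to the disjoint non-separating simple closed geodesics $a$ and $b$: that is, I would choose $T$ so that $a$ is contained in the interior of an annular neighborhood $N_a$ triangulated by arcs $\alpha_1,\dots,\alpha_k$, and $b$ is contained in the interior of a disjoint annular neighborhood $N_b$ triangulated by arcs $\beta_1,\dots,\beta_m$, with $N_a\cap N_b=\emptyset$. Such a triangulation exists because $a$ and $b$ have disjoint representatives, and we can extend any triangulation of $N_a\sqcup N_b$ to one of $\Sigma$. In particular, every arc $\alpha_i$ is disjoint from every arc $\beta_j$, and this property is preserved throughout the flip sequences realizing the two twists (since all intermediate arcs remain inside $N_a$ or $N_b$ respectively).

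Next I would recall the standard fact that a Dehn twist along a non-separating simple closed geodesic can be expressed as a finite sequence of flips on arcs in a small neighborhood of the geodesic, composed with a permutation of labels. Thus there exist words
\begin{equation*}
D_a=\sigma_a\, F_{\alpha_{i_1}}\cdots F_{\alpha_{i_p}},\qquad D_b=\sigma_b\, F_{\beta_{j_1}}\cdots F_{\beta_{j_q}},
\end{equation*}
where the intermediate arcs $\alpha_{i_\ell}$ stay inside $N_a$ and the $\beta_{j_\ell}$ stay inside $N_b$, and where $\sigma_a,\sigma_b$ are label permutations supported on disjoint arc sets. Their lifts are then
\begin{equation*}
\widetilde{D}_a=\sigma_a\, K(F_{\alpha_{i_1}})\cdots K(F_{\alpha_{i_p}}),\qquad \widetilde{D}_b=\sigma_b\, K(F_{\beta_{j_1}})\cdots K(F_{\beta_{j_q}}).
\end{equation*}

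Finally, since every $\alpha_{i_\ell}$ is disjoint from every $\beta_{j_m}$, property (1) of the proposition on $K$ gives $K(F_{\alpha_{i_\ell}})K(F_{\beta_{j_m}})=K(F_{\beta_{j_m}})K(F_{\alpha_{i_\ell}})$; likewise the permutations $\sigma_a$ and $\sigma_b$ commute with each other and with all flip operators on the opposite side. Sliding all the $K(F_{\beta_{j_m}})$'s past the $K(F_{\alpha_{i_\ell}})$'s therefore yields $\widetilde{D}_a\widetilde{D}_b=\widetilde{D}_b\widetilde{D}_a$ with no central correction, proving the lemma. The main obstacle, and the only technical point to be careful about, is the choice of the adapted triangulation and the verification that the flip sequences realizing $D_a$ and $D_b$ really act only on arcs inside $N_a$ and $N_b$ respectively, so that the disjointness hypothesis of property (1) is available at every step.
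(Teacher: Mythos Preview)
Your argument is correct, and it supplies the details that the paper's one-line proof omits. The paper simply writes ``The lifts of two commutative elements are commutative in the central extension,'' which, read as a general statement about central extensions, is of course false: the commutator $[\widetilde D_a,\widetilde D_b]$ is a priori only a central element depending on $D_a$ and $D_b$, not necessarily trivial. What makes it trivial here is precisely the mechanism you isolate: one can choose an ideal triangulation so that $D_a$ and $D_b$ are realized by flip-and-permutation words supported on disjoint sets of arcs, and then property~(1) of the intertwiner $K$ (exact commutation of $K(F_\alpha)$ and $K(F_\beta)$ for disjoint $\alpha,\beta$) forces $\widetilde D_a\widetilde D_b=\widetilde D_b\widetilde D_a$ with no phase. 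Since the commutator of lifts is independent of the choice of lifts (changing a lift by a central element leaves the commutator unchanged), the conclusion then holds for any lifts.

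So your route and the paper's are compatible rather than genuinely different: you are making explicit the geometric input (disjointly supported flip words) and the algebraic input (property~(1) of $K$) that the paper's sentence leaves implicit. The only thing to be slightly careful about, as you already flag, is the existence of the adapted triangulation and the fact that the flip sequences for $D_a$ and $D_b$ stay inside their respective annular neighborhoods; this is standard (e.g.\ take one puncture on each boundary component of a regular neighborhood and triangulate), and the paper uses exactly such local models in its later computations.
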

\begin{proof}
The lifts of two commutative elements are commutative in the central extension.
\end{proof}
\begin{lemma}
 For the type-1 braid relation, by choosing the lifts, we have that: $$\widetilde{D}_a\widetilde{D}_b\widetilde{D}_a=\widetilde{D}_b\widetilde{D}_a\widetilde{D}_b.$$
\end{lemma}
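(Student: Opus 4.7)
The plan is to exploit the fact that $D_aD_bD_a = D_bD_aD_b$ in $\Gamma$, so that both $\widetilde{D}_a\widetilde{D}_b\widetilde{D}_a$ and $\widetilde{D}_b\widetilde{D}_a\widetilde{D}_b$ project to the same element of $\Gamma$ and therefore differ by a central element:
\[
\widetilde{D}_a\widetilde{D}_b\widetilde{D}_a = c\,\widetilde{D}_b\widetilde{D}_a\widetilde{D}_b
\]
for some $c\in\mathbb{C}^\ast$. The key observation is that this relation is \emph{asymmetric in multiplicities}: $\widetilde{D}_a$ appears twice on the left and once on the right, while $\widetilde{D}_b$ does the opposite. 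Consequently, if one replaces the chosen lift $\widetilde{D}_a$ by $\lambda\widetilde{D}_a$ (still a valid lift of $D_a$), the left-hand side scales by $\lambda^2$ and the right-hand side by $\lambda$, so the defect transforms to $c/\lambda$. Taking $\lambda=c$ kills the scalar, and the relation becomes exact.

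The step I would carry out first is to write explicit representatives of $\widetilde{D}_a$ and $\widetilde{D}_b$ as products of intertwinners $K(F_{\alpha_i})$ coming from a sequence of flips realizing the Dehn twists on an ideal triangulation of a once-punctured torus neighborhood containing $a\cup b$ (such a neighborhood exists because $i(a,b)=1$). This explicit form makes the projection onto $\Gamma$ visible and confirms that the ambiguity of the two sides is captured by a single scalar in $\mathbb{C}^\ast$. Then I would apply the rescaling described above.

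The main obstacle I anticipate is the \emph{consistency} of the rescaling: the same lift $\widetilde{D}_a$ must serve in every type-1 braid relation in which $a$ participates, and it must not break the type-0 commutation (this is harmless, since rescaling preserves commutators) or the remaining lantern, chain and puncture relations. To handle this, I would use the $\Gamma$-equivariance of the Chekhov-Fock quantization: any two pairs $(a,b)$ and $(a',b')$ of non-separating curves with geometric intersection number one are conjugate in $\Gamma$, so conjugating by a chosen lift of the conjugating mapping class shows that the scalar $c=c(a,b)$ is independent of the pair. Therefore a uniform choice of $\lambda$ simultaneously makes every type-1 braid relation exact, without interfering with the freedom still available for the remaining relations treated in the subsequent lemmas.
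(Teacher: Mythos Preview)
Your approach matches the paper's: observe that the two sides differ by a central scalar, absorb it by rescaling one of the lifts (the paper rescales $\widetilde{D}_b$ rather than $\widetilde{D}_a$, which is immaterial), and then transport the normalized lifts to every other pair $(x,y)$ with $i(x,y)=1$ by conjugating with a lift of a mapping class carrying $(a,b)$ to $(x,y)$. One small algebraic slip: replacing $\widetilde{D}_a$ by $\lambda\widetilde{D}_a$ sends the defect $c$ to $\lambda c$, not $c/\lambda$, so the correct choice is $\lambda=c^{-1}$ rather than $\lambda=c$.
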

\begin{proof}
Let $a$ and $b$ be two non-separating simple closed geodesics in $\Sigma$ with $i(a,b)=1$. Suppose that we have their lifts $\widetilde{D}_a$ and $\widetilde{D}_b$ such that: $$\widetilde{D}_b\widetilde{D}_a\widetilde{D}_b=z^k\widetilde{D}_a\widetilde{D}_b\widetilde{D}_a.$$

Then by changing the lift $\widetilde{D}_b$ to $\widetilde{D}'_b=z^k\widetilde{D}_b$ we have that:
$$\widetilde{D}_a\widetilde{D}'_b\widetilde{D}_a=\widetilde{D}'_b\widetilde{D}_a\widetilde{D}'_b.$$

Let $x$ and $y$ be another pair of non-separating simple closed geodesics with $i(x,y)=1$. Then there is a homeomorphism $\phi$ of $\Sigma$ sending $a$ and $b$ to $x$ and $y$ respectively. Let $\widetilde\phi$ be its arbitrary lift. Consider the lifts of $D_x$ and $D_y$ as following:
\begin{eqnarray*}
 \widetilde{D}_x=\widetilde\phi^{-1}\widetilde{D}_a\widetilde\phi, \\
 \widetilde{D}_y=\widetilde\phi^{-1}\widetilde{D}'_b\widetilde\phi.
\end{eqnarray*}

Then the corresponding type-1 braid relation is
$$\widetilde{D}_x\widetilde{D}_y\widetilde{D}_x=\widetilde{D}_y\widetilde{D}_x\widetilde{D}_y.$$

We begin by choosing a lift for $D_a$, then there is a unique lift of $D_b$ satisfying the trivial type-1 braid relation. By using homeomorphisms of $\Sigma$, the lifts of the other $D_x$ and $D_y$ appearing in the type-1 braid relations can be fixed too. Then we have the trivial type-1 braid relation everywhere.
\end{proof}

Consider Dehn twists as automorphisms in Ptolemy groupoid. Then they can be expressed as compositions of morphisms corresponding to a sequence of flips and permutations. By using the almost linear representation $K$, we obtain the lifts of Dehn twists. Notice that a Dehn twist may have several expressions different from each other by the relations in Proposition \ref{presentation}. Different expressions may induce different lifts. In the following part, we will prove that by choosing the expression carefully the induced lifted Dehn twists satisfy the relations in Proposition \ref{pro}. 

We will use $\widetilde{F}_\alpha$ to denote $K(F_\alpha)$. For the underlined parts in the following proofs, we use either pentagon relations or commutation relations.
\begin{lemma}
By choosing carefully the lifts for each Dehn twist in the lantern relation, we have that: $$\widetilde{D}_{a_0}\widetilde{D}_{a_1}\widetilde{D}_{a_2}\widetilde{D}_{a_3}=z^{-12}\widetilde{D}_{a_{12}}\widetilde{D}_{a_{23}}\widetilde{D}_{a_{13}}.$$
\end{lemma}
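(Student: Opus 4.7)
The plan is to fix an ideal triangulation $T_0$ of a neighborhood of a standard embedded four-holed sphere $P\subset\Sigma$ on which the seven curves $a_0,a_1,a_2,a_3,a_{12},a_{13},a_{23}$ appear as short simple closed geodesics, and to express each of the seven Dehn twists in the lantern relation as an explicit word in flips and label-permutations returning $T_0$ to itself. Lifting these seven words through the intertwiner functor $K$ produces definite choices of lifts $\widetilde{D}_{a_i}$ and $\widetilde{D}_{a_{ij}}$. As in the proof of the previous lemma, conjugating these lifts by an arbitrary lift of a homeomorphism of $\Sigma$ propagates the identity from this distinguished $P$ to every embedded four-holed sphere, so it is enough to check the claim for the standard model.

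With the seven lifts fixed, I would write the two sides of the lantern relation as flip-words $W_L$ and $W_R$ in the Ptolemy groupoid. Because $D_{a_0}D_{a_1}D_{a_2}D_{a_3}=D_{a_{12}}D_{a_{23}}D_{a_{13}}$ holds in $\Gamma$, the word $W_L W_R^{-1}$ projects to the identity of $\Gamma$ and therefore reduces in the central extension to some scalar $\lambda$. To compute $\lambda$ I would iteratively apply, in the operator algebra, the four relations of the previous section:
\begin{enumerate}
\item[(i)] $\widetilde{F}_\alpha^2=1$;
\item[(ii)] $\widetilde{F}_\alpha\widetilde{F}_\beta=\widetilde{F}_\beta\widetilde{F}_\alpha$ for disjoint arcs $\alpha,\beta$;
\item[(iii)] $\widetilde{F}_\alpha\sigma=\sigma\widetilde{F}_{\sigma(\alpha)}$ for a label-permutation $\sigma$;
\item[(iv)] the pentagon relation $\widetilde{F}_\alpha\widetilde{F}_\beta\widetilde{F}_\alpha\widetilde{F}_\beta\widetilde{F}_\alpha=z\,\sigma$.
\end{enumerate}
Only (iv) alters the scalar, and each signed application contributes a factor $z^{\pm 1}$. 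The identity $\lambda=z^{-12}$ therefore reduces to the statement that the net signed number of pentagon substitutions needed to reduce $W_L W_R^{-1}$ to $1$ is exactly $-12$.

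The main obstacle is combinatorial bookkeeping. Each Dehn twist requires a nontrivial chain of flips to be represented in the Ptolemy groupoid, so the unsimplified word $W_L W_R^{-1}$ is long, and the intermediate label permutations make it easy to lose track of which arc an operator is acting on. I would exploit the symmetries of $P$, in particular the cyclic rotation of $(a_0,a_1,a_2,a_3)$ and the $S_3$-symmetry permuting $\{a_{12},a_{13},a_{23}\}$, both to choose $T_0$ so that the flip-words for symmetric curves differ only by label-permutations, and to split the final computation into congruent blocks each contributing an equal share to the total pentagon count. The delicate step is keeping the signs and labels consistent through (iii) whenever a pentagon move is commuted past a label-swap, so that the final bookkeeping yields the desired exponent $-12$ rather than some other multiple.
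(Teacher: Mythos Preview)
Your plan is exactly the strategy the paper uses: fix a triangulated four-holed sphere, write each of the seven Dehn twists as an explicit flip--permutation word, lift via $K$, and reduce the difference of the two sides to a scalar by repeatedly applying the groupoid relations, with only pentagon moves contributing a factor of $z^{\pm 1}$. So the framework is right.

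The gap is that you never actually carry out the computation that produces the exponent $-12$. Everything in your proposal up to ``the net signed number of pentagon substitutions \dots\ is exactly $-12$'' is setup; the entire content of the lemma is that this number really is $-12$ rather than $0$, $-6$, or $-24$, and that can only be established by writing down specific flip-words and tracking the pentagons explicitly. The paper does this: it fixes a concrete 10-arc triangulation, gives explicit expressions for $D_0,\dots,D_3,D_{12},D_{13},D_{23}$ (with $4$, $4$, $4$, $4$, $8$, $8$, $12$ flips respectively after simplification), and then performs a long step-by-step reduction of $\widetilde D_{12}\widetilde D_{23}\widetilde D_{13}$, accumulating one factor of $z^{-1}$ at each underlined pentagon step until it reaches $z^{-12}\widetilde D_2\widetilde D_1\widetilde D_0\widetilde D_3$. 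Without something of this kind your argument has no mechanism for distinguishing $-12$ from any other integer.

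Your proposed shortcut via the symmetries of $P$ is also more optimistic than the situation warrants. The cyclic symmetry of the boundary curves and the $S_3$-symmetry of the $a_{ij}$ are symmetries of the \emph{curve configuration}, but they are not symmetries of any single ideal triangulation of the four-holed sphere with one marked point on each boundary component; in the paper's triangulation the words for $D_{12},D_{13},D_{23}$ are visibly inequivalent (the conjugating words have lengths $5$, $5$, $7$), and the long reduction does not break into congruent blocks. So you should not expect the pentagon count to factor as, say, $3\times(-4)$ or $4\times(-3)$ without doing the work.
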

\begin{proof}
 The proof is similar to the one in \cite{Funar}. Consider the the four-holed sphere with one puncture on each boundary component. The ideal triangulation and the labels are given as following:
\begin{center}
\includegraphics[scale=0.3]{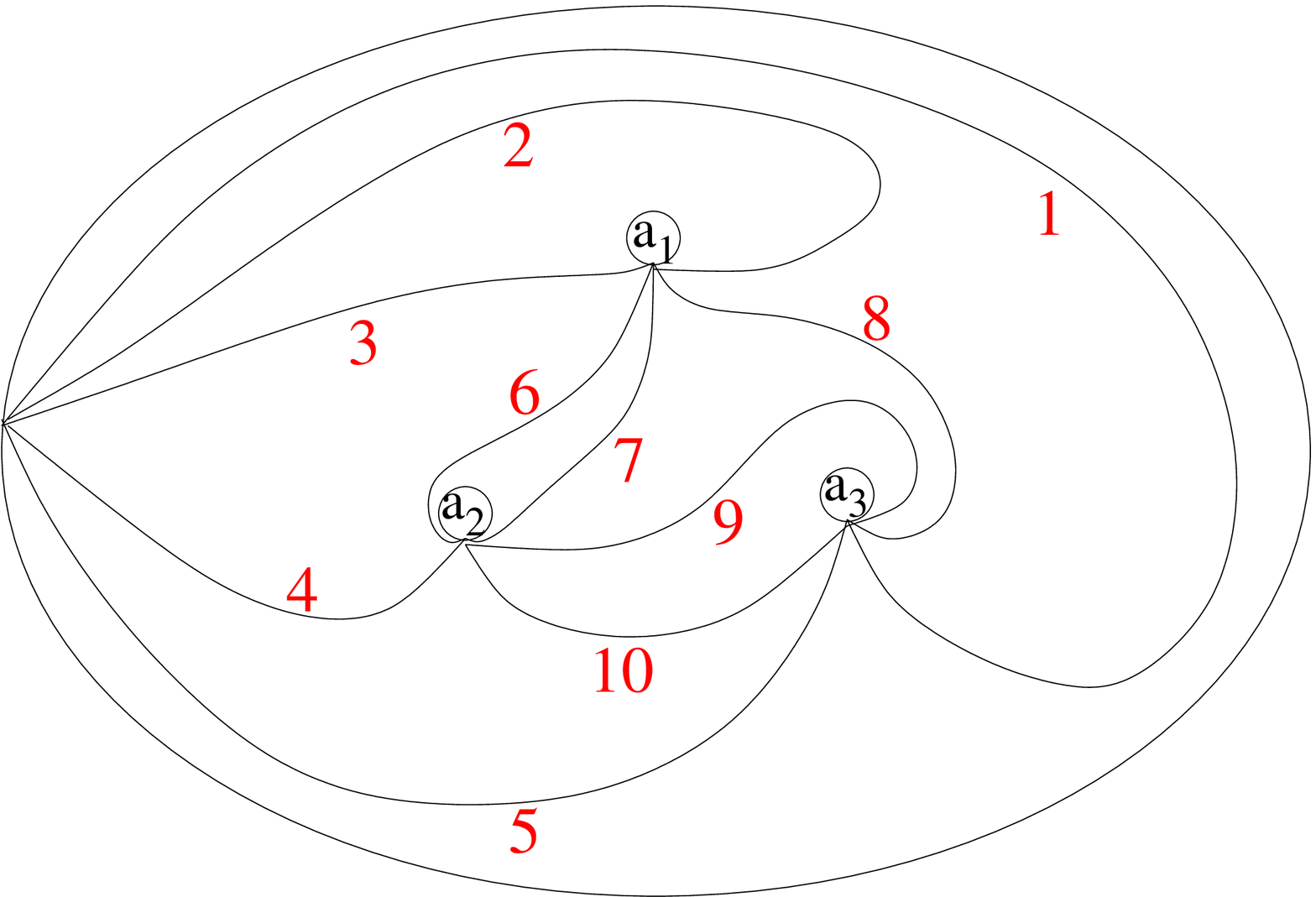}
\end{center}

Then the Dehn twist $D_0, D_1, D_2$ and $D_3$have the following expression:
\begin{eqnarray*}
D_0&=&F_5F_4F_3F_2
\left( 
 \right)=\\
&=&z^{-12}\widetilde{D}_2\widetilde{D}_1\widetilde{D}_0\widetilde{D}_3.
\end{eqnarray*} 
\end{proof}

\begin{lemma}
There are the lifts of Dehn twists such that for the puncture relation we have that: $$\widetilde{D}_{a_1}\widetilde{D}_{a_2}\widetilde{D}_{a_3}=z^{-12}\widetilde{D}_{a_{12}}\widetilde{D}_{a_{13}}\widetilde{D}_{a_{23}}.$$ 
\end{lemma}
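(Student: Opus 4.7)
The plan is to mimic, step for step, the proof of the lantern lemma immediately above, with the embedded four-holed sphere replaced by a three-holed sphere carrying one interior puncture of $\Sigma$. First I fix a labelled ideal triangulation of such a subsurface --- now adapted to the interior puncture --- and write down pentagonal flip/permutation expressions for the three boundary Dehn twists $D_{a_1}, D_{a_2}, D_{a_3}$ and for the three mixed Dehn twists $D_{a_{12}}, D_{a_{13}}, D_{a_{23}}$. As in the lantern case, each $D_{a_i}$ will be written as a short flip word followed by a cyclic permutation, while each $D_{a_{ij}}$ will first be presented in the conjugated form $\mathrm{Ad}(F_{\ast}\cdots F_{\ast})F_{\ast}\,\sigma$ --- obtained by flipping the triangulation until only two arcs cross the relevant geodesic, twisting, and flipping back --- and then simplified, using only pentagons, disjoint-flip commutations, $F_\alpha^2=1$, and the naturality $F_\alpha\sigma = \sigma F_{\sigma(\alpha)}$, into a single canonical flip word.

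With these canonical expressions in hand I lift them through the almost linear representation $K$ and form the products $\widetilde{D}_{a_1}\widetilde{D}_{a_2}\widetilde{D}_{a_3}$ and $\widetilde{D}_{a_{12}}\widetilde{D}_{a_{13}}\widetilde{D}_{a_{23}}$. Since the identity $D_{a_1}D_{a_2}D_{a_3} = D_{a_{12}}D_{a_{13}}D_{a_{23}}$ already holds in $\Gamma$, these two lifted products must coincide up to a central scalar $z^k$, and the content of the lemma is that $k = -12$. To compute $k$ I will rewrite the left-hand lifted word into the right-hand one through a chain of underlined substitutions, each being either a pentagon move --- contributing a factor of $z^{\pm 1}$ via $\widetilde{F}_\alpha\widetilde{F}_\beta\widetilde{F}_\alpha\widetilde{F}_\beta\widetilde{F}_\alpha = z\,\sigma$ --- or a disjoint-flip commutation, which contributes nothing; the sum of all pentagon contributions must yield exactly $z^{-12}$.

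The remaining work is then to exploit the freedom to rescale each chosen lift by a central constant so that the relation takes precisely the stated form, and to check, as was done for the type-$1$ braid and lantern lemmas by transporting lifts along mapping classes of $\Sigma$, that the same phase $z^{-12}$ arises on every embedded three-holed-sphere-with-puncture. The main obstacle will be combinatorial: one must find a sequence of pentagon and commutation moves that transforms one lifted word into the other. Since the interior puncture alters the local vertex structure relative to the lantern proof, the sequence of underlined substitutions cannot simply be copied --- a new arrangement must be designed. The expected count of twelve pentagon reductions, matching the coefficient $z^{-12}$, reflects the fact that the puncture relation is a degeneration of the lantern relation in which one hole shrinks to a puncture, but verifying this count demands an explicit and carefully ordered chain of moves adapted to the new triangulation.
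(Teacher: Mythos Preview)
Your proposal is correct and follows the same method the paper uses for the lantern lemma: choose a labelled ideal triangulation of the subsurface, write each Dehn twist as a flip/permutation word, lift via $K$, and track the pentagon contributions to read off the central scalar.

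The only difference is one of economy. The paper's entire proof of this lemma is the single sentence ``As this is a degenerated case of the lantern relation, the proof is the same as above.'' In other words, the paper does not design a fresh triangulation or a new chain of underlined substitutions; it regards the three-holed sphere with one interior puncture as the limit of the four-holed sphere in which the boundary $a_0$ has shrunk to a puncture, so that $D_{a_0}$ becomes trivial while the remaining flip words and the pentagon count are unchanged. Your caution that ``the sequence of underlined substitutions cannot simply be copied --- a new arrangement must be designed'' is therefore more conservative than what the paper asserts: the paper's position is precisely that no new arrangement is needed, and the same $z^{-12}$ falls out immediately from the lantern computation with $\widetilde{D}_{a_0}$ deleted. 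Either way the conclusion and the mechanism are identical.
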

\begin{proof}
As this is a degenerated case of the lantern relation, the proof is the same as above.
\end{proof}

\begin{lemma}
By choosing carefully the lifts of the Dehn twists in the chain relation, we have that: $$(\widetilde{D}_a\widetilde{D}_b\widetilde{D}_c)^4=z^{-24}\widetilde{D}_e\widetilde{D}_f.$$
\end{lemma}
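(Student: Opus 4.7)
My plan is to follow exactly the strategy used for the lantern relation. I would fix an ideal triangulation of a neighborhood of the chain $a\cup b\cup c$ inside a two-holed torus (with a puncture on each boundary component, to match the setting of Gervais' presentation) and express the five Dehn twists $D_a,D_b,D_c,D_e,D_f$ as compositions of flips and label permutations. The same two-step trick used in the lantern proof applies: first deform the triangulation so that only two arcs intersect the geodesic to be twisted, perform the elementary flip that realizes the Dehn twist there, and then undo the deformation by the inverse sequence of flips. The result is an explicit flip word $D_\star=\mathrm{Ad}(F_{i_1}\cdots F_{i_k})F_{j}\,\sigma_\star$ for each $\star\in\{a,b,c,e,f\}$.

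With these words in hand I would lift each Dehn twist via $\widetilde{D}_\star=K(D_\star)$ and expand the word $(\widetilde{D}_a\widetilde{D}_b\widetilde{D}_c)^4$, which becomes a very long product in the intertwiners $\widetilde{F}_i$ together with label permutations. Using only the three kinds of moves available in Proposition \ref{pro}, namely involutivity $\widetilde{F}_\alpha^2=1$, disjoint commutation $\widetilde{F}_\alpha\widetilde{F}_\beta=\widetilde{F}_\beta\widetilde{F}_\alpha$ (both of which carry no central factor), and the pentagon relation $\widetilde{F}_\alpha\widetilde{F}_\beta\widetilde{F}_\alpha\widetilde{F}_\beta\widetilde{F}_\alpha=z\,\sigma(\alpha,\beta)$ (which produces one power of $z$ at each use), I would push the word towards $\widetilde{D}_e\widetilde{D}_f$, keeping a running tally of pentagon applications. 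The target identity $(\widetilde{D}_a\widetilde{D}_b\widetilde{D}_c)^4=z^{-24}\widetilde{D}_e\widetilde{D}_f$ forces the net number of pentagons used in the reduction to be exactly $-24$ after signs are accounted for.

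An attractive alternative, which avoids the combinatorial explosion, is to exploit a topological factorization: the chain relation in a two-holed torus can be rewritten as the product of two lantern relations, modulo only braid and commutation moves. Since the previous lemmas of this section already show that type-$0$ commutations and type-$1$ braid relations lift with no central defect, and since the preceding lemma establishes that each lantern relation lifts with defect $z^{-12}$, two applications yield $z^{-12}\cdot z^{-12}=z^{-24}$, giving the desired constant immediately.

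The principal obstacle in the direct route is sheer bookkeeping: $(\widetilde{D}_a\widetilde{D}_b\widetilde{D}_c)^4$ is a twelvefold product of Dehn-twist lifts, each itself a string of roughly ten elementary flips, so the working word contains well over a hundred symbols and requires several dozen pentagon moves, each of which reshuffles adjacencies that must be re-verified at every step. In the alternative route the difficulty migrates to exhibiting the two-lantern decomposition cleanly at the word level, where one must check that all intermediate equalities indeed use only relations that lift without central defect; this is shorter but still needs a careful bookkeeping of the labeled triangulation on which the two lanterns are realized.
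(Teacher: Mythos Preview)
Your direct route is exactly what the paper does. The paper fixes an explicit triangulation of the two--holed torus with one puncture on each boundary component, writes $D_a,D_b,D_c,D_e,D_f$ as flip words, first checks that the chosen lifts $\widetilde D_a,\widetilde D_b,\widetilde D_c$ satisfy the type--1 braid relation with no defect, then computes $(\widetilde D_c\widetilde D_b\widetilde D_a)^2$, squares it, and reduces to $z^{-24}\widetilde D_f\widetilde D_e$ by a long string of pentagon and commutation moves. So your plan in the first two paragraphs is correct and matches the paper, including your assessment of the bookkeeping burden.

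Your alternative route, however, rests on a false premise. The chain relator is \emph{not} a product of two lantern relators modulo braid and commutation relators. Here is a clean way to see this using only results already in the paper. Suppose it were; then in any central extension of $\Gamma$ in which the braid relations and the lantern relation lift trivially, the chain relation would lift trivially as well. But Proposition~\ref{pro} exhibits exactly such an extension: after the normalisation $\widetilde D\mapsto z^{-12}\widetilde D$, the braid relations and the lantern relation hold with no central factor, while the chain relation acquires the factor $w^{12}$ with $w=z^{-12}$. For generic $z$ this is nontrivial, contradicting your hypothetical factorisation. Equivalently, in cohomological terms the chain relation detects the Meyer class $\chi$ independently of the lantern relation; the equality $-24=2\cdot(-12)$ of unnormalised defects is a numerical accident driven by the difference in Dehn--twist word lengths ($12-2=10$ for the chain versus $4-3=1$ for the lantern) together with the specific exponent of the pentagon constant, not by any underlying two--lantern decomposition. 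You therefore cannot avoid the direct computation.
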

\begin{proof}
Consider the two-holed torus with one puncture on each boundary component. The simple closed geodesics $a$, $b$, $c$, $e$ and $f$ are as following:
\begin{center}
\includegraphics[scale=0.5]{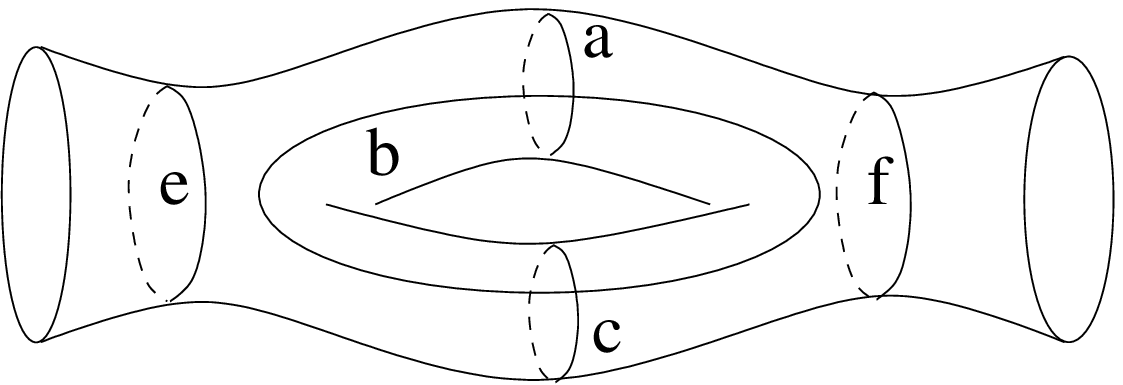}
\end{center}

We consider the following ideal triangulation of the two-holed torus:
\begin{center}
\includegraphics[scale=0.4]{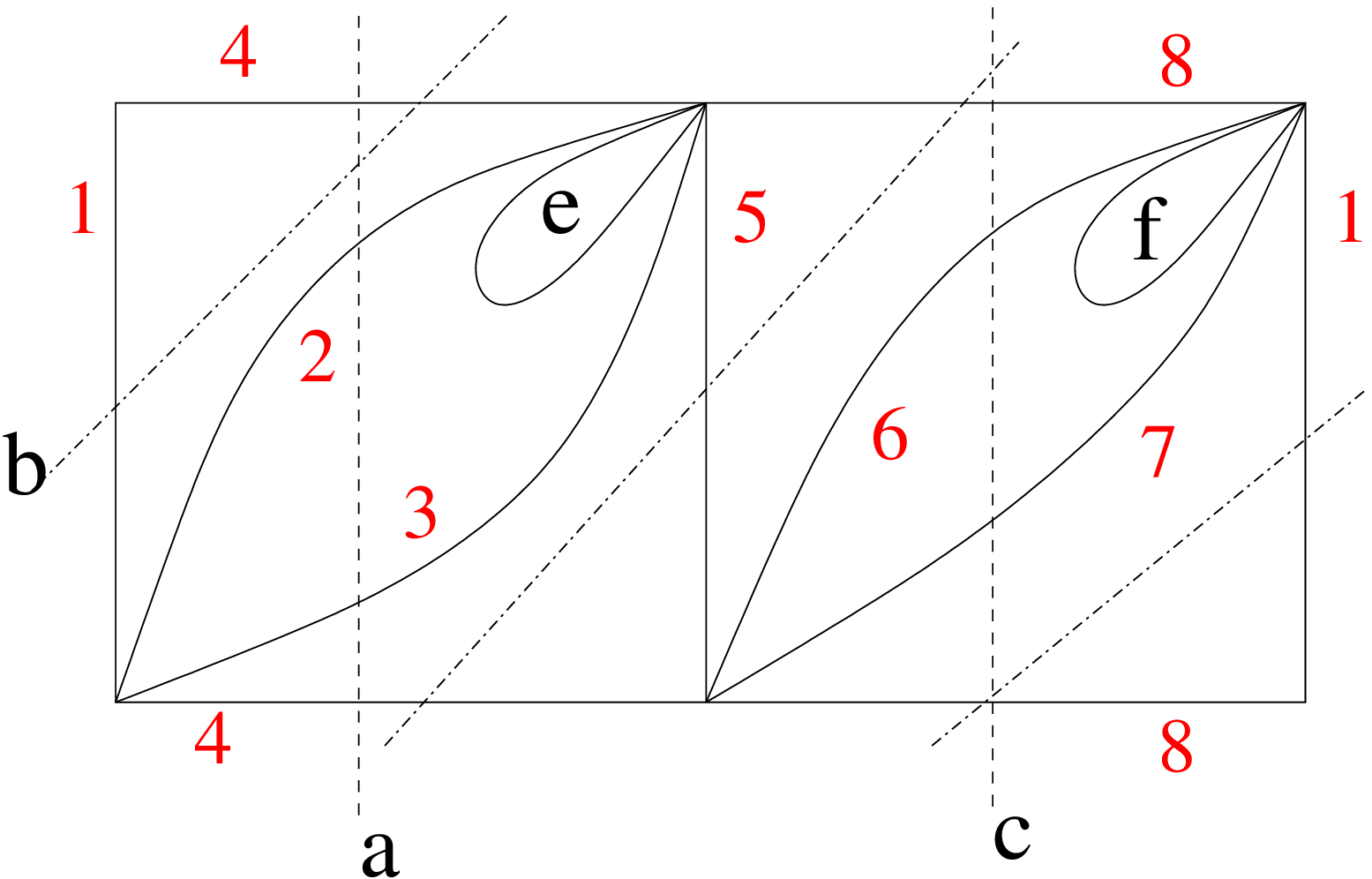}
\end{center}

The Dehn twists corresponding to $a$, $b$, $c$, $e$ and $f$ can be presented as following:
\begin{eqnarray*}
D_{a}&=&F_3F_4F_3
\left( 
 \right)=\\
&=&z^{-24}\widetilde{D}_f\widetilde{D}_e.
\end{eqnarray*}

The chain relation becomes:
\begin{equation*}
 (\widetilde{D}_a\widetilde{D}_b\widetilde{D}_c)^4=z^{-24}\widetilde{D}_e\widetilde{D}_f
\end{equation*}
\end{proof}

We say that the lifts of Dehn twists are \textbf{normalized} if all braid type relations and all lantern relations are lifted in the trivial way.

By the lemmas above, we normalize the lift of each Dehn twist $D$ from $\widetilde D$ to $z^{-12}\widetilde D$. Then we get the presentation in the proposition (\ref{pro}).

The cohomological arguments from \cite{Funar} settle our theorem.
\bibliographystyle{plain}
\bibliography{QuantumFock}
\end{document}